\providecommand{\U}[1]{\protect\rule{.1in}{.1in}}
\newtheorem{theorem}{Theorem}
\newtheorem{acknowledgement}[theorem]{Acknowledgement}
\newtheorem{corollary}[theorem]{Corollary}
\newtheorem{lemma}[theorem]{Lemma}
\newtheorem{proposition}[theorem]{Proposition}
\newtheorem{remark}[theorem]{Remark}
\newenvironment{proof}[1][Proof]{\noindent\textbf{#1.} }{\ \rule{0.5em}{0.5em}}
\begin{document}

\title{Three-dimensional noncompact $\kappa$-solutions that are Type I forward and backward}
\author{Xiaodong Cao\\Department of Mathematics, Cornell University, Ithaca, NY 14853\\\emph{E-mail address}: \texttt{cao@math.cornell.edu}
\and Bennett Chow\\Department of Mathematics, UC-San Diego, La Jolla, CA 92093\\\emph{E-mail address}: \texttt{chowbennett@gmail.com}
\and Yongjia Zhang\\Department of Mathematics, UC-San Diego, La Jolla, CA 92093\\\emph{E-mail address}: \texttt{yoz020@ucsd.edu}}
\maketitle

As indicated by the third author in \cite{Zhang}, there is a gap in the previous version of this paper by the first two authors \cite{CaoChow}. We provide in this version an argument to fix the aforementioned gap. The main proposition, whose proof uses Perelman's techniques, is implied by Ding \cite{Ding} and is covered by \cite{Zhang}. Our approach, however, is different from theirs. In addition, we prove a necessary and sufficient condition for a three-dimensional $\kappa$-solution to form a forward singularity. We hope that this condition is helpful in the classification of all three-dimensional $\kappa$-solutions. Up to now, the only main progress on such a classification, as conjectured by Perelman, is by Brendle \cite{Brendle2013}.

\section{Introduction}

A complete solution to the backward Ricci flow $(\mathcal{M}^{n},g(\tau))$,
$\tau\in(0,\infty)$, is a \textbf{Type I }$\kappa$\textbf{-solution} if
$\left\vert \operatorname{Rm}\right\vert (\tau)\leq\frac{C}{\tau}$ for some
constant $C$ and each $g(\tau)$ is $\kappa$-noncollapsed below all scales. In
this definition we do not assume nonnegativity of the curvatures.

As a special case of his result Proposition 0.1 in \cite{NiClosedTypeI}, Lei
Ni has classified $3$-dimensional closed Type I $\kappa$-solutions. In all
dimensions Ni first showed that if $(\mathcal{M}^{n},g(\tau))$, $\tau
\in(0,\infty)$, is a solution to the backward Ricci flow on a compact manifold
and satisfies $\left\vert \operatorname{Rm}\right\vert (\tau)\leq\frac{A}%
{\tau}$, then there exists a constant $C(n,A)$ such that%
\begin{equation}
\operatorname{diam}\left(  g(\tau)\right)  \leq\max\{\operatorname{diam}%
\left(  g(1)\right)  ,C(n,A)\}\sqrt{\tau}\quad\text{for }\tau\in
(1,\infty).\label{LeiNi1}%
\end{equation}
In particular, by Lemma 8.3(b) in Perelman \cite{Perelman1}, there exists
$C=C(n,A)$ such that $\frac{\partial}{\partial\tau}d_{\tau}(x_{1},x_{2}%
)\leq\frac{C(n,A)}{2\sqrt{\tau}}$ for any $x_{1},x_{2}\in\mathcal{M}$ with
$d_{\tau}(x_{1},x_{2})\geq C(n,A)\sqrt{\tau}$. Then (\ref{LeiNi1}) follows
from the consequence that for $x_{1},x_{2}\in\mathcal{M}$ and $\tau\geq1$ we
have%
\[
\frac{d_{\tau}(x_{1},x_{2})}{\sqrt{\tau}}<C(n,A)\quad\text{or}\quad
\frac{\partial}{\partial\tau}\frac{d_{\tau}(x_{1},x_{2})}{\sqrt{\tau}}\leq0.
\]

Using this, Ni proved that if $\left(  \mathcal{M}^{n},g\left(  \tau\right)
\right)  $, $\tau\in\left(  0,\infty\right)  $, is a closed Type I $\kappa
$-solution with positive curvature operator (PCO), then $\left(
\mathcal{M},g(\tau)\right)  $ is isometric to a shrinking spherical space
form. In particular, since $g\left(  \tau\right)  $ has PCO and $\mathcal{M}$
is closed, by Hamilton \cite{Hamilton1982}, \cite{Hamilton1986} when $n=3,4$
and B\"{o}hm and Wilking \cite{BohmWilking} when $n\geq5$, $g\left(
\tau\right)  $ converges to a constant positive sectional curvature (CPSC)
metric $g_{0}$ as $\tau\rightarrow0$. By 11.2 in \cite{Perelman1}, fixing $p$,
there exist $q_{i}$ such that $\ell_{(p,0)}^{g}\left(  q_{i},i\right)
\leq\frac{n}{2}$ and $\left(  \mathcal{M}^{n},i^{-1}g\left(  i\tau\right)
,q_{i}\right)  $ subconverges in the Cheeger--Gromov sense to a complete
nonflat shrinking gradient Ricci soliton (GRS) $\left(  \mathcal{M}_{\infty}^{n},g_{\infty}\left(
\tau\right)  ,q_{\infty}\right)  $. By (\ref{LeiNi1}), we have%
\[
\operatorname{diam}\left(  \frac{1}{i}g\left(  i\tau\right)  \right)  \leq
\max\{\operatorname{diam}\left(  g(-1)\right)  ,C(n,A)\}\sqrt{\tau}%
\quad\text{for }\tau\in(\frac{1}{i},\infty).
\]
Thus $\mathcal{M}_{\infty}$ is compact and diffeomorphic to $\mathcal{M}$.
Since $(\mathcal{M},g_{\infty}\left(  \tau\right)  )$ is irreducible with
nonnegative curvature operator on a topological spherical space form,
$g_{\infty}\left(  \tau\right)  $ must be a CPSC metric. By all of the above,
after rescaling, $g\left(  \tau\right)  $ converges to a metric which is
isometric to a constant multiple of $g_{\infty}\left(  1\right)  $ as either
$\tau\rightarrow0$ or $\tau\rightarrow\infty$. This implies that Perelman's
invariant $\nu(g(\tau))$ must be constant, which implies that $g(\tau)$ is a
shrinking GRS and hence a CPSC metric.

As a corollary, any $3$-dimensional closed Type I $\kappa$-solution must be
isometric to a shrinking spherical space form. The reason is as follows. By
B.-L. Chen \cite{ChenStrongUniqueness}, $\operatorname{Rm}\geq0$. If
$\operatorname{Rm}>0$, then $g(\tau)$ is a CPSC metric by Ni's theorem. On the
other hand, if the sectional curvatures are not positive, then $\mathcal{M}%
^{3}$ is covered by $\mathcal{S}^{2}\times\mathbb{R}$. Since any closed such
solution is $\kappa$-collapsed, we are done.

Observe that, by Brendle and Schoen \cite{BrendleSchoen} and Brendle
\cite{BrendleHarnack} (the latter enabling Perelman's $\kappa$-solution theory
to extend), Ni's theorem holds under Brendle--Schoen postivity of curvature.

In this note we observe that the combined results of Perelman \cite{Perelman1}%
, Naber \cite{Naber}, Enders, M\"{u}ller and Topping \cite{EMT}, and Zhang and
the first author \cite{CaoZhang} yield the following special case of the
assertion by Perelman (private communication to Ni) that any $3$-dimensional
Type I $\kappa$-solution with PCO must be a shrinking CPSC metric. As we
mentioned in the abstract, this result is implied by the earlier work of
Ding \cite{Ding} and is generalized in the recent work of the third
author \cite{Zhang}, where the condition of being Type I forward in time
is removed.

\begin{proposition} \label{proposition}
Suppose that $(\mathcal{M}^{3},g(\tau))$, $\tau\in(0,\infty)$, is a $\kappa
$-solution to the backward Ricci flow with PCO forming a singularity at
$\tau=0$ and satisfying $\left\vert \operatorname{Rm}\right\vert (\tau
)\leq\frac{A}{\tau}$, then $\mathcal{M}$ is closed and $g(\tau)$ is a
shrinking CPSC metric.
\end{proposition}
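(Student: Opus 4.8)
The plan is to prove that $\mathcal{M}$ must be compact; once this is known, the conclusion is exactly Ni's theorem \cite{NiClosedTypeI} as recalled in the introduction. Note first that PCO forces $\operatorname{Rm}>0$, hence positive sectional curvature since $n=3$. I would begin with the backward blow-down. Fixing $p\in\mathcal{M}$, Perelman's construction in Section~11 of \cite{Perelman1}, whose validity in the present noncompact Type~I situation is supplied by Naber \cite{Naber} and Cao--Zhang \cite{CaoZhang}, gives points $q_i$ with $\ell_{(p,0)}(q_i,i)\le\frac{n}{2}$ such that $(\mathcal{M},i^{-1}g(i\tau),q_i)$ subconverges in the pointed Cheeger--Gromov sense to a complete nonflat shrinking GRS $(\mathcal{M}_\infty^3,g_\infty(\tau),q_\infty)$ that is $\kappa$-noncollapsed with bounded nonnegative curvature operator; nonflatness holds because a flat backward limit would force Perelman's asymptotic reduced volume to equal $1$ and hence $g(\tau)$ itself to be flat. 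By Perelman's structure theory for three-dimensional $\kappa$-solutions \cite{Perelman1}, a complete nonflat three-dimensional shrinking GRS that is $\kappa$-noncollapsed with bounded nonnegative curvature is either a spherical space form $\mathcal{S}^3/\Gamma$ or a quotient of the round cylinder $\mathcal{S}^2\times\mathbb{R}$. If $\mathcal{M}_\infty$ is compact, then pointed Cheeger--Gromov convergence to a compact limit forces $\mathcal{M}$ to be diffeomorphic to $\mathcal{M}_\infty$, hence compact, and we are done; conversely, if $\mathcal{M}$ were compact then the diameter estimate \eqref{LeiNi1} would force $\mathcal{M}_\infty$ compact, as in Ni's argument recalled above. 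So it remains to rule out the case that $\mathcal{M}$ is noncompact, equivalently $\mathcal{M}_\infty=(\mathcal{S}^2\times\mathbb{R})/\Gamma$ for some nontrivial $\Gamma$.

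Assume $\mathcal{M}$ is noncompact. Since its sectional curvature is positive, the Gromoll--Meyer theorem gives $\mathcal{M}\cong\mathbb{R}^3$. Under the convergence above every compact submanifold of $\mathcal{M}_\infty$ embeds into $\mathcal{M}$ for large $i$; but every nontrivial quotient of $\mathcal{S}^2\times\mathbb{R}$ arising here (necessarily a free $\mathbb{Z}_2$-quotient) contains an embedded $\mathbb{R}P^{2}$, which does not embed in $\mathbb{R}^3$. Hence $\mathcal{M}_\infty=\mathcal{S}^2\times\mathbb{R}$, the round shrinking cylinder. Now I would bring in the forward singularity: on a fixed interval of $\tau$ ending at the singular time $\tau=0$ the flow is Type~I, $\kappa$-noncollapsed and of bounded curvature, so by Enders--M\"{u}ller--Topping \cite{EMT} (see also Naber \cite{Naber}) the tangent flow at a singular point $\bar p$ is a nonflat shrinking GRS $S_+$, again of one of the two types above; the compact type is impossible, since the blow-ups at $\bar p$ would then Cheeger--Gromov converge to a compact manifold, again forcing $\mathcal{M}$ compact. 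Thus $S_+$ is a (possibly trivial) quotient of the round cylinder.

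The contradiction comes from Perelman's reduced volume $\widetilde{V}_{(\bar p,0)}(\tau)$ based at the singular spacetime point $(\bar p,0)$. It is monotone nonincreasing in $\tau$, with $\lim_{\tau\to0^+}\widetilde{V}_{(\bar p,0)}(\tau)=\widetilde{V}(S_+)$ (the Gaussian density of the singularity, identified with the reduced volume of the forward tangent flow in \cite{EMT}, \cite{CaoZhang}) and $\lim_{\tau\to\infty}\widetilde{V}_{(\bar p,0)}(\tau)=\widetilde{V}(\mathcal{M}_\infty)$, the basepoint-independent asymptotic reduced volume \cite{Perelman1}. With $v_c:=\widetilde{V}(\mathcal{S}^2\times\mathbb{R})$, monotonicity gives $\widetilde{V}(S_+)\ge\widetilde{V}(\mathcal{M}_\infty)=v_c$, while $S_+$ being a quotient of the cylinder gives $\widetilde{V}(S_+)\le v_c$; hence $\widetilde{V}_{(\bar p,0)}(\tau)\equiv v_c$, and by the rigidity in Perelman's monotonicity $g(\tau)$ is itself a shrinking gradient Ricci soliton. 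But then $g(\tau)$ is a complete three-dimensional shrinking GRS with bounded curvature, $\kappa$-noncollapsed and with $\operatorname{Rm}>0$, so by the structure theory above it is a compact spherical space form---contradicting $\mathcal{M}\cong\mathbb{R}^3$. Therefore $\mathcal{M}$ is compact, and by Ni's theorem $g(\tau)$ is a shrinking CPSC metric.

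The step I expect to be the main obstacle is the forward analysis of the second paragraph together with the identity $\lim_{\tau\to0^+}\widetilde{V}_{(\bar p,0)}(\tau)=\widetilde{V}(S_+)$: one must verify that the singular set is nonempty (so that the Enders--M\"{u}ller--Topping tangent flow $S_+$ is genuinely nonflat), that their Type~I tangent-flow theory is available in this complete, noncompact, $\kappa$-noncollapsed setting, and that Perelman's reduced length and volume based at the singular time behave as expected. This is exactly where the Type~I hypothesis is indispensable---without it one could not exclude, for instance, the Bryant soliton, which has $\operatorname{Rm}>0$ and whose backward blow-down is also the round cylinder---and where Perelman's techniques, refined through \cite{EMT} and \cite{CaoZhang}, do the essential work.
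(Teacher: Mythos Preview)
Your strategy is correct and matches the paper's: assume $\mathcal{M}$ is noncompact (hence $\mathbb{R}^3$), show that both the backward and the forward asymptotic shrinkers are the round cylinder $\mathcal{S}^2\times\mathbb{R}$, and derive a contradiction via monotonicity. The paper phrases the final step as the second part of Naber's Theorem~3.1 in \cite{Naber}, which is precisely the reduced-volume rigidity you spell out directly; the two formulations are equivalent.

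You have also correctly isolated the one genuine gap, and it is the one you flag but do not resolve: the existence of a singular point $\bar p$ on the noncompact $\mathcal{M}$, without which the EMT tangent flow $S_+$ could be flat and your inequality $\widetilde V(S_+)\ge v_c$ collapses. The paper's main new input is a lemma proving that on a three-dimensional $\kappa$-solution forming a forward singularity, \emph{every} point is singular. The idea is this: if $R(p,\tau_i)$ stayed bounded along some $\tau_i\searrow 0$, then by Hamilton's trace Harnack $R(p,\tau_i)\in[c,C]$, so Perelman's $\kappa$-compactness applied to the time-shifted flows $(\mathcal{M},g(\tau+\tau_i),p)$ yields a limit $\kappa$-solution with curvature bounded by some $A$ at $\tau=0$; but choosing $x_{i_1}$ with $R(x_{i_1},\tau_{i_1})>100A$ and then $i_2\gg i_1$, the point $x_{i_1}$ lies inside the region of $C^{i_2}$-closeness to the limit (distances only shrink forward in time under nonnegative Ricci) while still having $R(x_{i_1},\tau_{i_2})\ge R(x_{i_1},\tau_{i_1})>100A$ by Harnack, a contradiction. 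With this lemma in hand, your outline becomes a complete proof, essentially identical to the paper's.
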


Note that we have assumed that the solution is Type I both forward and
backward in time. Applications of this result to the study of shrinking
gradient Ricci soliton (GRS) singularity models follow from Naber
\cite[\S 5]{Naber}, Lu and the second author \cite[Theorem 3]{ChowLuSplit},
and Munteanu and Wang \cite{MunteanuWangAS}.

\section{Proof of the proposition}
Before we proceed to prove Proposition \ref{proposition}, we prove the following lemma that asserts the existence of a singular point at the forward singular time on a $3$-dimensional $\kappa$-solution. This is crucial in proving that the blow-up limit is nonflat. The existence of such a point is an issue because of the noncompactness of $\mathcal{M}$; see Remark 1.1 in \cite{EMT}. We actually prove that \emph{every} point of $\mathcal{M}$ is a singular point.

\begin{lemma}\label{lemma}
Let $(\mathcal{M}^{3},g(\tau))$, where $\tau\in(0,\infty)$, be a $\kappa$-solution that forms a singularity at $\tau=0$ in the sense that $\displaystyle \lim_{\tau\rightarrow0^+}\sup_{x\in \mathcal{M}}R(x,\tau)=\infty$, where $R$ denotes the scalar curvature. Then every $p\in \mathcal{M}$ is a singular point in the sense that $\displaystyle \limsup_{\tau\rightarrow 0^+}R(p,\tau)=\infty$.
\end{lemma}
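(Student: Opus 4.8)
The plan is to argue by contradiction using the fact that a $\kappa$-solution has bounded curvature on each compact time interval, together with the trace Harnack inequality, to propagate a curvature bound at one point backward in time to a curvature bound everywhere. Suppose the conclusion fails, so there is a point $p\in\mathcal{M}$ and a constant $K<\infty$ with $R(p,\tau)\le K$ for all $\tau\in(0,\varepsilon]$ for some $\varepsilon>0$. A $\kappa$-solution is an ancient solution with bounded nonnegative curvature on each time slice, and in particular $R(\cdot,\tau)\ge0$; moreover by Hamilton's trace Harnack inequality (applicable since the solution is ancient with $\operatorname{Rm}\ge0$ and bounded curvature on compact time intervals, extended to this generality by Brendle as noted in the excerpt), the function $\tau\mapsto R(p,\tau)$ is monotone, namely $\partial_\tau R\le R/\tau$ (so $\tau^{-1}R$ has a definite sign of time-derivative), which lets one compare values of $R(p,\cdot)$ across the interval. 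The key point I want to extract is a local curvature bound near $p$ that does not degenerate as $\tau\to0^+$.

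The main step is to upgrade the pointwise bound at $p$ to a bound on a fixed metric ball. Here I would use Perelman's local derivative/curvature estimates for $\kappa$-solutions (the ``$\kappa$-solution'' structure theory from \cite{Perelman1}, §11): there is a function depending only on $\kappa$ and $n=3$ such that a bound $R(p,\tau_0)\le K$ forces $R\le C(\kappa)K$ on the $g(\tau_0)$-ball of radius $c(\kappa)K^{-1/2}$ about $p$ and on a backward parabolic neighborhood of definite size. In fact, the cleanest route is to invoke the point-picking/derivative estimate: if $R$ blew up somewhere as $\tau\to0^+$ but stayed bounded at $p$, I can pick a sequence $(x_i,\tau_i)$ with $\tau_i\to0$, $R(x_i,\tau_i)\to\infty$ and with $R$ controlled on a backward parabolic region of $(x_i,\tau_i)$ of size comparable to $R(x_i,\tau_i)^{-1/2}$, and rescale. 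The rescaled flows converge (by Hamilton's compactness for $\kappa$-solutions, using $\kappa$-noncollapsing to get injectivity radius control) to a $\kappa$-solution with $R=1$ at the basepoint at the final time. The contradiction comes from locating $p$ relative to these basepoints: using the distance-distortion estimate for Ricci flow with $\operatorname{Rm}\ge0$ (distances are nonincreasing in forward time, i.e. $d_{\tau}$ is nondecreasing as $\tau$ decreases — equivalently $\partial_\tau d_\tau\le0$ up to the usual Ricci-lower-bound corrections from Lemma 8.3 of \cite{Perelman1}), the point $p$ stays within bounded $g(\tau)$-distance of a fixed reference point as $\tau\to0$, so $p$ lands inside the region where the rescaled solutions have curvature $\ge$ a definite multiple of $R(x_i,\tau_i)$, forcing $R(p,\tau_i)\to\infty$, contradiction.

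A cleaner implementation, which I would actually prefer to write up, avoids blow-ups: combine the trace Harnack $\partial_\tau(\tau R)\le 0$... wait, with the sign conventions here ($\tau$ decreasing toward the singularity) one gets that $\sup_{\mathcal M}R(\cdot,\tau)$ is nonincreasing in $\tau$... so actually let me instead use: for a $\kappa$-solution, $R(x,\tau)\le R(p,\tau)\cdot \exp\bigl(C\,\mathrm{dist}_\tau(x,p)^2/\tau\bigr)$-type estimates are false in general, but the \emph{asymptotic scalar curvature ratio} and the structure theory do give that on a $\kappa$-solution the curvature cannot concentrate only away from a fixed point while the solution is $\kappa$-noncollapsed — if $R(p,\tau_i)\le K$ then by the derivative estimate $|\nabla R|\le C R^{3/2}$ and $|\partial_\tau R|\le C R^2$ of Perelman, $R$ is bounded on a whole backward parabolic ball $B_{g(\tau_i)}(p, c/\sqrt K)\times[\tau_i/2,\tau_i]$ by $C(\kappa)K$; iterating this along a chain of overlapping balls (using $\kappa$-noncollapsing to bound the number of balls needed to reach any fixed point and the diameter control from distance monotonicity) bounds $R$ uniformly on all of $\mathcal M$ for small $\tau$, contradicting the hypothesis $\sup_{\mathcal M}R(\cdot,\tau)\to\infty$. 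The main obstacle is making the chaining argument uniform as $\tau\to0$: one must ensure that finitely many balls of radius $\sim K^{-1/2}$ cover a $g(\tau)$-neighborhood of $p$ large enough to contain any fixed compact set, and this is exactly where $\kappa$-noncollapsing (volume lower bounds $\Rightarrow$ bounded covering number at a fixed scale) and Lei Ni's diameter-type control / distance distortion under $\operatorname{Rm}\ge0$ enter; the noncompactness of $\mathcal M$ means ``any fixed compact set'' is the best one can hope for, which is why the statement is phrased pointwise ``every $p$'' rather than globally uniform. I expect the bookkeeping of constants through the iteration, and checking that the parabolic-neighborhood curvature bound from Perelman's estimates does not shrink its spatial scale as $\tau_i\to0$, to be the technically delicate part.
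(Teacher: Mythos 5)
The ingredients you gather — Hamilton's trace Harnack, distance monotonicity under $\operatorname{Rm}\geq 0$, Perelman's $\kappa$-compactness and derivative estimates — are indeed the ones the paper uses. But both of your proposed assemblies have gaps, and the paper's actual mechanism is different from either.

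Your first route (rescale by $Q_i=R(x_i,\tau_i)$ at the high-curvature points) fails because the basepoints $x_i$ are \emph{not fixed} and may run off to spatial infinity. After rescaling by $Q_i\to\infty$, the rescaled distance from $x_i$ to $p$ is $Q_i^{1/2}\,d_{g(\tau_i)}(x_i,p)$, which has no reason to stay bounded; so $p$ escapes to infinity in the blown-up picture and the limit tells you nothing about $R$ at $p$. There is also no "fixed reference point" with high curvature to which $p$ stays close. Your second route (chaining balls of radius $\sim K^{-1/2}$ via Perelman's derivative estimates) can only bound $R$ on a fixed compact set around $p$, with an estimate that degenerates exponentially in the number $N$ of balls: $R\lesssim 4^{N}K$. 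Since the high-curvature points $(x_i,\tau_i)$ may lie at $g(\tau_i)$-distance from $p$ that grows without bound as $R(x_i,\tau_i)\to\infty$, the number of balls needed to reach them is unbounded, and you get no contradiction with $\sup_{\mathcal M}R\to\infty$. You yourself flag that noncompactness limits you to compact sets — that observation is exactly where the argument breaks.

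What the paper does instead, and what is missing from your sketch, is a diagonal argument with \emph{no rescaling} and with attention anchored at $p$. Assuming $R(p,\tau_i)\leq C$ and $R(p,\tau_1)=c>0$ (so $R(p,\tau_i)\in[c,C]$ by Harnack), one applies $\kappa$-compactness to the time-shifted, \emph{unrescaled} flows $g_i(\tau)=g(\tau+\tau_i)$ centered at $(p,0)$; the limit is a $\kappa$-solution, hence has a \emph{global} curvature bound $A$ at time $0$. Now fix a \emph{single} high-curvature point $x_{i_1}$ with $R(x_{i_1},\tau_{i_1})>100A$; Harnack keeps $R(x_{i_1},\tau)\geq 100A$ for all $\tau<\tau_{i_1}$, and distance monotonicity under $\operatorname{Rm}\geq 0$ keeps $d_{g(\tau)}(p,x_{i_1})$ bounded by the fixed number $d_{g(\tau_{i_1})}(p,x_{i_1})$ for $\tau<\tau_{i_1}$. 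Taking $i_2\gg i_1$, the fixed point $x_{i_1}$ then lands in the Cheeger--Gromov convergence region $V_{i_2}\supset \overline{B_{g_{i_2}(0)}(p,i_2)}$ where the curvature of $g_{i_2}(0)=g(\tau_{i_2})$ is $i_2^{-1}$-close to that of $g_\infty(0)$, hence $\leq 2A$ — contradicting $R(x_{i_1},\tau_{i_2})\geq 100A$. The crucial idea you did not reach is to freeze one high-curvature point, propagate its curvature \emph{forward} in time by Harnack while it is pulled toward $p$ by distance shrinkage, and let the compactness-derived limit (with its built-in global curvature bound) supply the contradiction. This sidesteps both the moving-basepoints problem in your first route and the exponential degeneration in your chaining route.
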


\begin{proof}
Since $0$ is a singular time, by definition we may find a sequence $\{(x_i,\tau_i)\}_{i=1}^\infty$, such that $\tau_i\searrow 0$ and $R(x_i,\tau_i)\rightarrow\infty$. Suppose $p\in \mathcal{M}$ is not a singular point. Then there exists $C<\infty$ such that $R(p,\tau_i)\leq C$ for every $i\in\mathbb{N}$. By Hamilton's trace Harnack estimate \cite{Hamilton1993}, we have $\displaystyle \frac{\partial R}{\partial \tau}\leq 0$. Hence $R(p,\tau_i)\in[c,C]$, for all $i\in\mathbb{N}$, where we denote $c=R(p,\tau_1)>0$. Define $g_i(\tau)=g(\tau+\tau_i)$. Then we can use Perelman's $\kappa$-compactness theorem \cite{Perelman1} to extract a (not relabelled) subsequence from $\displaystyle \{(\mathcal{M},g_i(\tau),(p,0))_{\tau\in[0,\infty)}\}_{i=1}^\infty$, which converges to a $\kappa$-solution $(\mathcal{M}_\infty,g_\infty(\tau),(p_\infty,0))_{\tau\in[0,\infty)}$. In particular, $(\mathcal{M}_\infty,g_\infty(0))$ has bounded curvature. Let $A<\infty$ be the curvature bound of $(\mathcal{M}_\infty,g_\infty(0))$. By the definition of pointed smooth Cheeger--Gromov convergence and by passing to a suitable subsequence, there exists a sequence of open precompact sets $\{U_i\}_{i=1}^\infty$ exhausting $(\mathcal{M}_\infty,g_\infty(0))$, where each $U_i$ contains $p_\infty$, and there exists a sequence of diffeomorphisms
\begin{eqnarray*}
\psi_i:U_i&\rightarrow& V_i\subset(\mathcal{M},g_i(0)),
\\
\psi_i(p_\infty)&=&p,
\end{eqnarray*}
with the following properties. We have $\overline{B_{g_i(0)}(p,i)}\subset V_i$ and that $\psi_i^*g_i(0)$ is $i^{-1}$-close to $g_\infty(0)$ on $U_i$ with respect to the $C^i$-topology. Notice here that we actually have Cheeger--Gromov convergence of the solutions of the backward Ricci flow on the whole time interval $[0,\infty)$, but we need only to use the convergence on the time zero slice. Let $i_1\in\mathbb{N}$ be large enough so that $R(x_{i_1},\tau_{i_1})>100A$, where the existence of $i_1$ is guaranteed by the assumption that $R(x_i,\tau_i)\rightarrow\infty$. Then we select $i_2>i_1$ such that $dist_{g_{i_1}(0)}(p,x_{i_1})=dist_{g(\tau_{i_1})}(p,x_{i_1})<100^{-1}i_2$. Since the Ricci flow with nonnegative curvature shrinks distances forward in time, it follows that $dist_{g(\tau_{i_2})}(p,x_{i_1})<100^{-1}i_2$ and hence that $x_{i_1}\in\overline{B_{g_{i_2}(0)}(p,i_2)}\subset V_{i_2}$. Moreover, by Hamilton's trace Harnack estimate \cite{Hamilton1993} we have $R(g_{i_2}(0))(x_{i_1})=R(x_{i_1},\tau_{i_2})\geq R(x_{i_1},\tau_{i_1})>100A$, since $\tau_{i_2}<\tau_{i_1}$. This yields a contradiction when $i_2$ is large enough (say $i_2>10000$) since $\psi_{i_2}^{-1}(x_{i_1})$ is contained in the set $U_{i_2}$ on which $\psi_{i_2}^*g_{i_2}(0)$ is $i_2^{-1}$-close to $g_\infty(0)$ with respect to the $C^{i_2}$-topology, while the curvature of $g_\infty(0)$ is bounded by $A$.
\end{proof}

\bigskip
We now give the proof of our main result.
\bigskip

\begin{proof}[Proof of Proposition \ref{proposition}]
By Ni's theorem, we may suppose that $\mathcal{M}^{3}$ is noncompact, so that
$\mathcal{M}$ is diffeomorphic to $\mathbb{R}^{3}$. By the first part of
Theorem 3.1 in \cite{Naber}, for any $x\in\mathcal{M}$, $\tau_{i}%
^{-}\rightarrow0$, and $\tau_{i}^{+}\rightarrow\infty$, $(\mathcal{M}%
,(\tau_{i}^{\pm})^{-1}g(\tau_{i}^{\pm}\tau),(x,1))$ subconverges to a noncompact
shrinking GRS $(\mathcal{M}^{\pm},g^{\pm}(\tau),(x^{\pm},1))$ which does not contain any
embedded $\mathbb{R}P^{2}$. By Theorem 1.1 in \cite{EMT} and by Lemma \ref{lemma} above, $(\mathcal{M}%
^{-},g^{-}(\tau))$ is nonflat since every point is a singular point, whereas by Theorem 4.1 in \cite{CaoZhang} (see
also the statements in its proof), $(\mathcal{M}^{+},g^{+}(\tau))$ is nonflat,
since both of these results apply to noncompact manifolds. By Lemma 1.2 in
Perelman \cite{Perelman2}, $g^{\pm}(\tau)$ cannot have PCO. Thus the
$(\mathcal{M}^{\pm},g^{\pm}(\tau))$ are isometric to (shrinking) round cylinders
$\mathcal{S}^{2}\times\mathbb{R}$. By the second part of Theorem 3.1 in
\cite{Naber}, we conclude that the same is true for $(\mathcal{M}^{3}%
,g(\tau))$, which contradicts $g(\tau)$ having PCO.
\end{proof}

\begin{remark}
In \cite{Zhang} by the third author, it shown that there do not exist $3$-dimensional noncompact 
PCO $\kappa$-solutions only assuming the solution is Type I backward. This confirms an assertion
that Grisha Perelman made to Lei Ni.
\end{remark}

\section{A criterion for ancient solutions to form forward singularities}

In this section we present an application of Lemma \ref{lemma}, which gives a necessary and sufficient condition for a $3$-dimensional $\kappa$-solution to form a forward singularity.

\begin{corollary}
A $3$-dimensional $\kappa$-solution forms a forward singularity if and only if at some time slice $\inf_{\mathcal{M}} R>0$.
\end{corollary}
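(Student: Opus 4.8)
The plan is to combine Lemma~\ref{lemma} with Hamilton's trace Harnack estimate and the structure theory of three-dimensional $\kappa$-solutions. We may assume $R>0$ everywhere, since otherwise $g(\tau)$ is flat by \cite{ChenStrongUniqueness} and the strong maximum principle and neither side of the equivalence holds. By Hamilton's trace Harnack \cite{Hamilton1993}, $\partial_\tau R\le 0$, so $\tau\mapsto R(x,\tau)$ is nonincreasing for each fixed $x$; taking infima and suprema over $\mathcal M$, both $\tau\mapsto\inf_{\mathcal M}R(\cdot,\tau)$ and $\tau\mapsto\sup_{\mathcal M}R(\cdot,\tau)$ are nonincreasing in $\tau$. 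Hence ``$g(\tau)$ forms a forward singularity'' is equivalent to $\lim_{\tau\to0^+}\sup_{\mathcal M}R(\cdot,\tau)=\infty$, ``$\inf_{\mathcal M}R>0$ at some time slice'' is equivalent to $\lim_{\tau\to0^+}\inf_{\mathcal M}R(\cdot,\tau)>0$, and if $\inf_{\mathcal M}R(\cdot,\tau_0)=c>0$ then $R\ge c$ on all of $\mathcal M\times(0,\tau_0]$.

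For the implication ``$\inf_{\mathcal M}R(\cdot,\tau_0)=c>0\ \Rightarrow$ forward singularity'' I would argue as follows. The slice $(\mathcal M,g(\tau_0))$ has $\operatorname{Ric}\ge0$ and $R\ge c>0$. If $\mathcal M$ is compact, Ni's theorem \cite{NiClosedTypeI} forces $g(\tau)$ to be a shrinking spherical space form, which forms a forward singularity. If $\mathcal M$ is noncompact, the Cheeger--Gromoll splitting theorem leaves two cases. Either $(\mathcal M,g(\tau_0))$ splits isometrically as $N^2\times\mathbb R$, in which case, by uniqueness of complete bounded-curvature Ricci flow, the whole solution splits as $h(\tau)\times dr^2$ with $h(\tau)$ a two-dimensional $\kappa$-solution, necessarily the round shrinking $\mathcal S^2$, so $g(\tau)$ is a round shrinking cylinder and forms a forward singularity; or $(\mathcal M,g(\tau_0))$ has a single end. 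In the remaining case I extend the flow forward to its maximal interval of existence: if this interval is forward-finite, the solution forms a forward singularity and we are done; if it is eternal, then (since $\sup_{\mathcal M}R\le\bar A<\infty$ and $\kappa$-noncollapsing are preserved by the extension, and, by Harnack, $R\ge c$ persists for all $\tau\le\tau_0$) a standard point-selection argument as $\tau\to-\infty$ together with Perelman's $\kappa$-compactness produce an eternal $\kappa$-solution attaining the supremum of its scalar curvature, hence a nonflat steady gradient Ricci soliton by Hamilton's theorem on eternal solutions, with $\inf R=0$ by Brendle \cite{Brendle2013} (or, more elementarily, because three-dimensional noncompact steady solitons have scalar curvature tending to zero at spatial infinity); since the bound $R\ge c$ passes to this limit, this is a contradiction.

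For the converse, suppose $g(\tau)$ forms a forward singularity. If $\mathcal M$ is compact then $R>0$ on a compact manifold gives $\inf_{\mathcal M}R>0$ at every slice. If $\mathcal M$ is noncompact, then by Lemma~\ref{lemma} every point of $\mathcal M$ is a singular point, so exactly as in the proof of Proposition~\ref{proposition}---using Theorem~1.1 of \cite{EMT}, the first part of Theorem~3.1 of \cite{Naber}, and Perelman's Lemma~1.2 \cite{Perelman2}---every blow-up limit of $g(\tau)$ at $\tau=0$, based at an arbitrary fixed $x\in\mathcal M$, is the round shrinking cylinder $\mathcal S^2\times\mathbb R$. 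The remaining task is to upgrade this to the uniform bound $\inf_{\mathcal M}R(\cdot,\tau)>0$ for $\tau$ small, which I would extract from Perelman's structure theory: since the blow-up model at every fixed basepoint is the round cylinder, for $\tau$ close to $0$ the manifold $(\mathcal M,g(\tau))$ must be, away from a region of comparably large curvature, a union of $\varepsilon$-necks of curvature $\approx c/\tau$---in particular it can carry no Bryant-type end on which $R\to0$---so $\inf_{\mathcal M}R(\cdot,\tau)\ge c'/\tau>0$; equivalently, the only noncompact three-dimensional $\kappa$-solution that forms a forward singularity is the round shrinking cylinder, whose scalar curvature is a positive constant on each time slice.

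The main obstacle is precisely this last passage, from ``the blow-up at each fixed point is the round cylinder'' to a lower bound on $R$ that is uniform over the entire noncompact $\mathcal M$; it is where the global structure theory of three-dimensional $\kappa$-solutions (ruling out an asymptotically-Bryant end once the blow-up model is known to be cylindrical) has to be used with care, and it is also where I would expect to need the technical fact that an eternal three-dimensional $\kappa$-solution has bounded curvature in space-time. A possibly cleaner alternative for the converse is a direct contradiction: if $\inf_{\mathcal M}R(\cdot,\tau)=0$ for every $\tau$, choose $p_i\to\infty$ and $\tau_i\to0$ with $R(p_i,\tau_i)\to0$, pass to a Cheeger--Gromov limit of the pointed flows $(\mathcal M,g(\tau+\tau_i),(p_i,0))$---which is a flat $\kappa$-solution by the strong maximum principle---and contradict the coexistence of this flat region with the cylindrical blow-up model on one and the same solution.
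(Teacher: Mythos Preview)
Your argument departs from the paper's in both directions, and in the direction ``singularity $\Rightarrow\inf_{\mathcal M}R>0$'' there is a genuine gap that you yourself flag.

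For that direction you correctly identify the obstacle: knowing that the blow-up at each \emph{fixed} basepoint is the round cylinder does not by itself give a lower bound on $R$ uniform over the whole noncompact $\mathcal M$. Neither of your proposed fixes closes this. The structure-theory sketch (``away from a region of comparably large curvature, a union of $\varepsilon$-necks of curvature $\approx c/\tau$'') presupposes exactly the uniform scale you are trying to establish. The flat-limit alternative does not yield a contradiction either: a sequence $(p_i,\tau_i)$ with $R(p_i,\tau_i)\to0$ produces, at best, a flat limit based at those points, while the cylindrical limit lives at different basepoints; there is no single solution on which the two coexist. The paper avoids all of this with a one-line computation you are missing: by Lemma~\ref{lemma}, $R(p,\tau)\nearrow\infty$ as $\tau\searrow0$ for every $p$, and integrating Perelman's derivative estimate $|\partial_\tau R|\le\eta R^2$ (with $\eta=\eta(\kappa)$) from $0$ to $\tau$ gives $R(p,\tau)\ge 1/(\eta\tau)$ for every $p$ and every $\tau>0$.

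For ``$\inf_{\mathcal M}R>0\Rightarrow$ singularity'' your route is considerably heavier than the paper's and has its own soft spots. Invoking Ni's theorem in the compact case imports a Type~I backward hypothesis the corollary does not assume (the compact case is of course immediate from the ODE for $R_{\min}$). In the noncompact one-end case, your eternal-extension argument asserts a uniform space-time bound $\sup R\le\bar A$ which is not justified---Harnack makes $R$ nondecreasing in forward time---and the ``point-selection producing a limit attaining its supremum'' step, together with passing the bound $R\ge c$ to that rescaled limit, is not spelled out. The paper's argument is shorter and sidesteps these issues: take any $x_i\to\infty$ with $R(x_i,T)$ converging (possible since $c\le R(\cdot,T)\le C$), normalize so the limit is $1$, and use $\kappa$-compactness together with splitting at spatial infinity to obtain the shrinking round cylinder as the pointed limit; since the cylinder becomes singular at $\tau=-1$, the original flow cannot persist with bounded curvature past $\tau=T-1$.
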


\begin{proof}
Let $(\mathcal{M}^{3},g(\tau))$, where $\tau\in(0,\infty)$, be a $\kappa$-solution to the backward Ricci flow that forms a singularity at $\tau=0$. By Lemma \ref{lemma}, for every $p\in \mathcal{M}$, $R(p,\tau)$ increases to infinity as $\tau\searrow0$. By integrating Perelman's derivative estimate \cite{Perelman1}
\begin{eqnarray*}
\left|\frac{\partial R}{\partial\tau}\right|\leq\eta R^2,
\end{eqnarray*}
where $\eta$ depends only on $\kappa$, from $0$ to $\tau$, we have
\begin{eqnarray*}
R(p,\tau)\geq\frac{1}{\eta\tau}
\end{eqnarray*}
for every $p\in \mathcal{M}$ and $\tau\in(0,\infty)$. It follows immediately that $\displaystyle\inf_{p\in \mathcal{M}}R(p,\tau)>0$ for every $\tau\in(0,\infty)$.
\\

On the other hand, suppose $(\mathcal{M}^{3},g(\tau))$, where $\tau\in[0,\infty)$, is a $\kappa$-solution to the backward Ricci flow such that $\inf_{p\in \mathcal{M}}R(p,T)=c>0$ for some $T>0$. We use an idea of Perelman \cite{Perelman2} to show that the solution cannot be extended forward to time infinity. Up to scaling the solution by a constant factor, we can find a sequence $x_i\rightarrow\infty$, such that $\displaystyle \lim_{i\rightarrow\infty}R(x_i,T)=1$. Applying the $\kappa$-compactness theorem \cite{Perelman1}, we can extract a (not relabelled) subsequence of $\{(\mathcal{M},g(\tau+T),(x_i,0))\}_{i=1}^\infty$, converging to a $\kappa$-solution $(\mathcal{M}_\infty,g_\infty(\tau),(x_\infty,0))$, which must be the shrinking cylinder since we have splitting at infinity; see \cite{Perelman1}. Moreover, we have $R_\infty(x_\infty,0)=1$ and $(\mathcal{M}_\infty,g_\infty(\tau))$ has unbounded curvature as $\tau\rightarrow-1$. Then we can conclude that $(\mathcal{M},g(\tau))$ becomes singular as $\tau\rightarrow T-1$. For suppose this is not the case. Then there exists an $\varepsilon>0$ such that $R(g(\tau))$ is uniformly bounded for $\tau\in[T-1-\varepsilon,\infty)$. It then follows that the limit flow $(\mathcal{M}_\infty,g_\infty(\tau))$ exists and has bounded curvature for $\tau\in[-1-\varepsilon,\infty)$, which is a contradiction.
\end{proof}

\begin{acknowledgement}
We would like to thank Peng Lu, Ovidiu Munteanu, Lei Ni, and Jiaping Wang for
helpful discussions. X. Cao's research was partially supported by a grant from the Simons Foundation (\#280161).
\end{acknowledgement}

\end{document}